  \newcommand{\cC}{{C}}  
  \def\sw#1{{\sb{(#1)}}}
  \def\<{{\langle}} 
  \def\>{{\rangle}}
  \def\eps{\varepsilon}
  \def\note#1{{}}
  \def\note#1{}
  \def\cC{{\mathcal C}}
  \def\rhom#1#2#3{{{\rm Hom}\sb{#1}(#2,#3)}}
  \def\hom#1#2{{{\rm Hom}(#1,#2)}}
  \def\beq{\begin{equation}} 
  \def\eeq{\end{equation}} 
  \def\DC{{\Delta_C}} 
  \def \eC{{\eps_C}} 
  \def\DH{{\Delta_H}} 
  \def \eH{{\eps_H}}
  \def\id{\mathrm{id}}
  \def\ot{{\otimes}}
  \def\roN{\varrho^{N}} 
  \def\Nro{{}^{N}\!\varrho}
  \newcounter{zlist} 
  \newenvironment{zlist}{\begin{list}{(\arabic{zlist})}{ 
  \usecounter{zlist}\leftmargin2.5em\labelwidth2em\labelsep0.5em 
  \topsep0.6ex
  \parsep0.3ex plus0.2ex minus0.1ex}}{\end{list}}
  \newcounter{blist}
  \newcounter{rlist}
\def\stac#1{\raise-.2cm\hbox{$\stackrel{\displaystyle\otimes}{\scriptscriptstyle{#1}}$}}
\def\cten#1{\raise-.2cm\hbox{$\stackrel{\displaystyle\widehat{\otimes}}
{\scriptscriptstyle{#1}}$}}
  \def\Label#1{\label{#1}\ifmmode\llap{[#1] }\else 
  \marginpar{\smash{\hbox{\tiny [#1]}}}\fi} 
  \def\Label{\label}
  \newtheorem{lemma}{Lemma} 
  \newtheorem{theorem}{Theorem}
  \theoremstyle{definition}
  \theoremstyle{remark}
  \theoremstyle{definition} 
  \newtheorem{statement}{}
  \newcounter{c} 
  \newcommand{\etyk}[1]{\vspace{-7.4mm}$$\begin{equation}\Label{#1} 
  \addtocounter{c}{1}} 
  \renewcommand{\]}{\ifnum \value{c}=1 $$\else \end{equation}\fi} 
\begin{document} 

 \title{Hopf-cyclic homology with contramodule coefficients} 
 \author{Tomasz Brzezi\'nski}
 \address{ Department of Mathematics, Swansea University, 
  Singleton Park, \newline\indent  Swansea SA2 8PP, U.K.} 
  \email{T.Brzezinski@swansea.ac.uk}   
    \date{\today} 
 \subjclass[2000]{19D55} 
  \begin{abstract} 
A new class of coefficients for the Hopf-cyclic homology of module algebras and coalgebras  is introduced. These coefficients, termed {\em stable anti-Yetter-Drinfeld contramodules}, are both modules and {\em contramodules} of a Hopf algebra that satisfy certain compatibility conditions. 
  \end{abstract} 
  \maketitle

 \begin{statement} {\bf Introduction.}  
 It has been demonstrated in  \cite{HajKha:Hop},  \cite{JarSte:coh} that the Hopf-cyclic homology developed by Connes and Moscovici \cite{ConMos:cyc} admits a class of non-trivial coefficients. These coefficients, termed {\em anti-Yetter-Drinfeld  modules} are modules and comodules of a Hopf algebra satisfying a compatibility condition reminiscent of that for cross modules. The aim of this note is to show that the Hopf-cyclic (co)homology of module coalgebras and module algebras also admits coeffcients that are modules and {\em contramodules} of a Hopf algebra with a compatibility condition.
 
 All (associative and  unital) algebras, (coassociative and counital) coalgebras in this note are over a field $k$. The coproduct in a coalgebra $C$ is denoted by $\Delta_C$, and counit by $\eps_C$. A Hopf algebra $H$ is assumed to have a bijective antipode $S$. We use the standard Sweedler notation for coproduct $\Delta_C(c) = c\sw 1\ot c\sw 2$, $\Delta^2_C (c) = c\sw 1\ot c\sw 2\ot c\sw 3$, etc., and for 
 the left coaction $\Nro$ a $C$-comodule $N$, $\Nro(x) = x\sw{-1} \ot x \sw 0$ (in all cases summation is implicit). $\hom V W$ denotes the space of $k$-linear maps between vector spaces $V$ and $W$. 
 \end{statement}

\begin{statement} {\bf Contramodules.}   The notion of a {\em contramodule} for a coalgebra was introduced in \cite{EilMoo:fou}, and discussed in parallel with that of a comodule. A {\em right contramodule} of a coalgebra $C$ is a vector space $M$ together with a $k$-linear map $\alpha : \hom C M\to M$ rendering the following diagrams commutative
$$
\xymatrix{
\hom  C {\hom  C M} \ar[rrrr]^-{\hom  C {\alpha}}\ar[d]_\Theta &&&& \hom C M \ar[d]^{\alpha} \\
\hom {C\ot C} M \ar[rr]^-{\hom  \DC M} && \hom  C M \ar[rr]^-{\alpha} && M ,}
$$
$$
\xymatrix{\hom  k M \ar[rr]^-{\hom  \eC M} \ar[rd]_\simeq && \hom  C M\ar[dl]^{\alpha} \\
& M , & }
$$
where $\Theta$ is the standard isomorphism given by $\Theta(f)(c\ot c') = \Theta(f)(c)(c')$. {\em Left contramodules} are defined by similar diagrams, in which $\Theta$ is replaced by the isomorphism $\Theta'(f)(c\ot c') = f(c')(c)$ (or equivalenty, as right contramodules for the co-opposite coalgebra $C^{op}$). Writing blanks for the arguments, and denoting by matching dots the respective functions $\alpha$ and their arguments, the definition of a right $C$-contramodule can be explicitly written as, for all $f\in \hom {C\ot C} M$, $m\in M$,
$$
\dot{\alpha}\left( \ddot{\alpha} \left (f\left (\dot{-}\ot \ddot{-}\right) \right)\right) = \alpha\left( f\left ((-)\sw 1\ot (-)\sw 2\right)\right), \qquad \alpha \left( \eC(-) m\right) = m.
$$
With the same conventions  the conditions for left contramodules are
$$
\dot{\alpha}\left( \ddot{\alpha} \left (f\left (\ddot{-}\ot \dot{-}\right) \right)\right) = \alpha\left( f\left ((-)\sw 1\ot (-)\sw 2\right)\right), \qquad \alpha \left( \eC(-) m\right) = m.
$$
If $N$ is a left $C$-comodule with coaction $\Nro: N\to C\ot N$, then its dual vector space $M = N^* := \hom N k$ is a right $C$-contramodule with the structure map
$$
\alpha : \hom C M \simeq \hom {C\ot N} k \to \hom N k = M, \qquad \alpha = \hom \Nro k.
$$
Explicitly, $\alpha$ sends a functional $f$ on $C\ot N$ to the functional $\alpha(f)$ on $N$, 
$$\alpha(f)(x) = f(x\sw{-1}\ot x\sw 0), \qquad x\in N.
$$
The dual vector space of a right $C$-comodule $N$ with a coaction $\roN: N\to N\ot C$ is a left $C$-contramodule with the structure map $\alpha = \hom \roN k$. The reader interested in more detailed accounts of the contramodule theory is referred to \cite{BohBrz:mon}, \cite{Pos:hom}.

\end{statement}

\begin{statement} {\bf Anti-Yetter-Drinfeld contramodules.}   Given a Hopf algebra $H$ with a bijective antipode $S$, anti-Yetter-Drinfeld contramodules are defined as $H$-modules and $H$-contramodules with a compatibility condition. Very much the same as in the case of anti-Yetter-Drinfeld modules \cite{HajKha:ant} they come in four different flavours.

\begin{zlist}
\item A {\em left-left anti-Yetter-Drinfeld contramodule} is a left $H$-module (with the action denoted by a dot) and a left $H$-contramodule with the structure map $\alpha$, such that, for all $h\in H$ and $f\in \hom H M$,
$$
h\!\cdot\! \alpha(f) = \alpha\left( h\sw 2\!\cdot\! f\left( S^{-1}(h\sw 1) (-) h\sw 3\right)\right).
$$ 
$M$ is said to be {\em stable}, provided that, for all $m\in M$, $\alpha(r_m) = m$, where $r_m: H\to M$, $h\mapsto h\!\cdot\! m$.

\item A {\em left-right anti-Yetter-Drinfeld contramodule} is a left $H$-module  and a right $H$-contramodule, such that, for all $h\in H$ and $f\in \hom H M$,
$$
h\!\cdot\! \alpha(f) = \alpha\left( h\sw 2\!\cdot\! f\left( S(h\sw 3) (-) h\sw 1\right)\right).
$$ 
$M$ is said to be {\em stable}, provided that, for all $m\in M$, $\alpha(r_m) = m$.

\item A {\em right-left anti-Yetter-Drinfeld contramodule} is a right $H$-module  and a left $H$-contramodule, such that, for all $h\in H$ and $f\in \hom H M$,
$$
\alpha(f)\!\cdot\! h = \alpha\left( f\left(h\sw 3 (-) S(h\sw 1)\right) \!\cdot\! h\sw 2 \right).
$$ 
$M$ is said to be {\em stable}, provided that, for all $m\in M$, $\alpha(\ell_m) = m$, where $\ell_m : H\to M$, $h\mapsto m\!\cdot\! h$. 

\item A {\em right-right anti-Yetter-Drinfeld contramodule} is a right $H$-module and a right $H$-contramodule, such that, for all $h\in H$ and $f\in \hom H M$,
$$
\alpha(f)\!\cdot\! h = \alpha\left( f\left(h\sw 1 (-) S^{-1}(h\sw 3)\right) \!\cdot\! h\sw 2 \right).
$$ 
$M$ is said to be {\em stable}, provided that, for all $m\in M$, $\alpha(\ell_m) = m$.
\end{zlist} 

In a less direct, but more formal way, the compatibility condition for left-left anti-Yetter-Drinfeld contramodules can be stated as follows. For all $h\in H$ and $f\in \hom H M$, define $k$-linear maps $\ell\ell_{f,h}: H\to M$, by 
$$
\ell\ell_{f,h}: h'\mapsto h\sw 2\!\cdot \!f\left( S^{-1}(h\sw 1) h' h\sw 3\right).
$$
 Then the main condition in (1) is 
 $$
 h\!\cdot\! \alpha(f) = \alpha\left(\ell\ell_{f,h}\right), \qquad \forall h\in H, \ f\in \hom H M .
 $$
Compatibility conditions between action and the structure maps $\alpha$ in (2)--(4) can be written in analogous ways. 

If $N$ is an anti-Yetter-Drinfeld module, then its dual $M=N^*$ is an anti-Yetter-Drinfeld contramodule (with the sides interchanged). Stable anti-Yetter-Drinfeld modules correspond to stable contramodules. For example, consider a right-left Yetter-Drinfeld module $N$. The compatibility between the right action and left coaction $\Nro$ thus is, for all $x\in N$ and $h\in H$,
$$
\Nro (x\!\cdot\! h) = S(h\sw 3) x\sw {-1}h\sw 1 \ot x\sw 0 h\sw 2.
$$
The dual vector space $M= N^*$ is a left $H$-module by $h\ot m \mapsto h\cdot m$, 
$$
(h\!\cdot\! m) (x) =m(x\!\cdot\! h),
$$
for all $h\in H$, $m\in M =\hom N k$ and $x\in N$, and a right $H$-contramodule with the structure map $\alpha(f) = f\circ \Nro$, $f\in \hom {H\ot N} k \simeq \hom H M$. The space $\hom {H\ot N} k$ is a left $H$-module by $(h\!\cdot\! f)(h'\ot x) = f(h'\ot x\!\cdot\! h)$. Hence
$$
(h\!\cdot\! \alpha(f) )(x) = \alpha(f)(x\!\cdot\! h) = f\left(\Nro \left( x\!\cdot\! h\right) \right),
$$
and
\begin{eqnarray*}
\alpha\left( h\sw 2\!\cdot\! f\left( S(h\sw 3) (-) h\sw 1\right)\right)(x) &=&  h\sw 2\!\cdot\! f\left( S(h\sw 3) x\sw{-1} h\sw 1 \ot x\sw 0\right) \\
&=&  f\left( S(h\sw 3) x\sw{-1} h\sw 1 \ot x\sw 0\!\cdot\! h\sw 2\right).
\end{eqnarray*}
Therefore, the compatibility condition in item (2) is satisfied. The $k$-linear map $r_m : H\to M$ is identified with $r_m: H\ot N \to k$, $r_m(h\ot x) = m(x\!\cdot\! h)$. In view of this identification, the stability condition comes out as, for all $m\in M$ and $x\in N$,
$$
m(x) = \alpha(r_m)(x) = r_m(x\sw{-1}\ot x\sw 0) =  m(x\sw 0\!\cdot\! x\sw{-1}),
$$
and is satisfied provided $N$ is a stable right-left anti-Yetter-Drinfeld module. Similar calculations establish connections between other versions of anti-Yetter-Drinfeld modules and contramodules.
\end{statement}
\begin{statement} {\bf Hopf-cyclic homology of module coalgebras.}  
Let $C$ be a left $H$-module coalgebra. This means that $C$ is a coalgebra and a left $H$-comodule such that, for all $c\in C$ and $h\in H$,
$$
\DC (h\!\cdot\! c) = h\sw 1\!\cdot\! c\sw 1\ot h\sw 2\!\cdot\! c\sw 2, \qquad \eC(h\!\cdot\! c) = \eH(h)\eC(c).
$$
The multiple tensor product of $C$, $C^{\otimes n+1}$, is  a left $H$-module by the diagonal action, that is
$$
h\!\cdot\! (c^0\ot c^ 1\ot \ldots \ot c^n):= h\sw 1\!\cdot\! c^0\ot h\sw 2\!\cdot\! c^ 1\ot \ldots \ot h\sw{n+1}\!\cdot\! c^n.
$$
Let $M$ be a stable left-right anti-Yetter-Drinfeld contramodule. For all positive integers $n$, set $C_n^H(C,M) := \rhom H {C^{\otimes n+1}} M$ (left $H$-module maps),  and, for all $0\leq i,j\leq n$, define
$
d_i: C_n^H(C,M)\to C_{n-1}^H(C,M)$, $s_j: C_n^H(C,M)\to C_{n+1}^H(C,M)$, $t_n: C_n^H(C,M)\to C_{n}^H(C,M)$, 
by
\begin{eqnarray*}
d_i(f)(c^0,\ldots , c^{n-1}) &=& f(c^0, \ldots, \DC(c^i),\ldots , c^{n-1}), \qquad 0\leq i < n,\\
d_n(f)(c^0,\ldots , c^{n-1}) &=& \alpha\left(f\left( c^0\sw 2, c^1, \ldots , c^{n-1}, (-)\!\cdot\! c^0\sw 1\right)\right),\\
s_j (f) (c^0,\ldots , c^{n+1}) &=& \eC(c^{j+1})f(c^0,\ldots ,c^j, c^{j+2}, \ldots ,c^{n+1}),\\
t_n (f)(c^0,\ldots , c^{n}) &=&\alpha\left(f\left( c^1, \ldots , c^{n}, (-)\!\cdot\! c^0\right)\right) .
\end{eqnarray*}
It is clear that all the maps $s_j$, $d_i$, $i<n$, are well-defined, i.e.\ they send left $H$-linear maps to left $H$-linear maps. That $d_n$ and $t_n$ are well-defined follows by the anti-Yetter-Drinfeld condition. To illustrate how the anti-Yetter-Drinfeld condition enters here we check that the $t_n$ are well defined. For all $h\in H$,
\begin{eqnarray*}
t_n (f)
(h\!\cdot\! (c^0,\ldots , c^{n})) &=& t_n (f)(h\sw 1\!\cdot\! c^0,\ldots , h\sw{n+1}\!\cdot\! c^{n})\\
&=&\!\!\! \alpha\left(f\left( h\sw{2}\!\cdot\! c^1, \ldots ,  h\sw{n+1}\!\cdot\! c^n,  (-)h\sw 1\!\cdot\! c^0\right)\right)\\
&=&\!\!\! \alpha\left(f\left(  h\sw{2}\!\cdot\! c^1, \ldots , h\sw{n+1}\!\cdot\! c^n,   h\sw{n+2} S(h\sw{n+3})(-)h\sw 1\!\cdot\! c^0\right)\right)
\\
&=&\!\!\! \alpha\left(h\sw 2\!\cdot\! f\left(c^1, \ldots , c^{n},    S(h\sw{3})(-)h\sw 1\!\cdot\! c^0\right)\right)\\
&=&\!\!\! h\!\cdot\! \alpha\left(f\left( c^1, \ldots , c^{n}, (-)\!\cdot\! c^0\right)\right)
= h\!\cdot\! t_n (f)(c^0,\ldots , c^{n}),
\end{eqnarray*}
where the third equation follows by the properties of the antipode and counit, the fourth one is a consequence of the $H$-linearity of $f$, while the anti-Yetter-Drinfeld condition is used to derive the penultimate equality. 

\begin{theorem}
Given a left $H$-module coalgebra $C$ and a left-right stable anti-Yetter-Drinfeld contramodule $M$, $C^H_*(C,M)$ with the $d_i$, $s_j$, $t_n$ defined above is a cyclic module.
\end{theorem}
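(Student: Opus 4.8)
The plan is to verify directly that $\bigl(C^H_*(C,M),d_i,s_j,t_n\bigr)$ satisfies Connes' axioms for a cyclic module: the simplicial identities among the $d_i$ and $s_j$; the mixed relations $d_it_n=t_{n-1}d_{i-1}$ for $1\le i\le n$, $d_0t_n=d_n$, $s_it_n=t_{n+1}s_{i-1}$ for $1\le i\le n$, and $s_0t_n=t_{n+1}^2s_n$; and the periodicity $t_n^{n+1}=\id$. Before any identity one must know that every operator lands in the space of left $H$-linear maps; for the internal faces $d_i$ with $i<n$ and for all degeneracies this is immediate from the module-coalgebra compatibilities $\DC(h\!\cdot\! c)=h\sw1\!\cdot\! c\sw1\ot h\sw2\!\cdot\! c\sw2$ and $\eC(h\!\cdot\! c)=\eH(h)\eC(c)$, while for the twisted operators $d_n$ and $t_n$ it is precisely the anti-Yetter-Drinfeld condition, by the very calculation carried out for $t_n$ in the text (for $d_n$ the extra factor produced by applying $\eC$ is absorbed using $\eC(h\!\cdot\! c)=\eH(h)\eC(c)$).

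For the simplicial part, the relations involving only internal $d_i$ and $s_j$ are those of the cyclic bar complex of the coalgebra $C$ and follow from coassociativity and counitality of $\DC,\eC$, carrying the diagonal $H$-action through the module-coalgebra compatibility. Those in which the twisted face $d_n$ (or a composite built from it) meets an internal operator on a disjoint block of tensor legs reduce to the obvious commutativity of such operators together with coassociativity and the counit identities. The one genuinely new simplicial identity is the extreme face relation $d_{n-1}d_n=d_{n-1}d_{n-1}$: it composes two copies of the structure map $\alpha$ on one side and a single copy on the other, and here I would invoke the contramodule ``associativity'' axiom (the pentagon-type diagram) to collapse the nested pair of $\alpha$'s into a single $\alpha$ whose argument is a function precomposed with a coproduct, which then matches the other side after a further use of the module-coalgebra compatibility. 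The contramodule counit axiom $\alpha(\eps_H(-)m)=m$, applied to the coalgebra $H$, together with the counit of $C$, similarly disposes of the identities (such as $d_{n+1}s_n=\id$) in which $d_n$ meets the top degeneracy.

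For the cyclic compatibilities: comparing the defining formulas gives $d_0t_n=d_n$ at once. The identities $d_it_n=t_{n-1}d_{i-1}$ and $s_it_n=t_{n+1}s_{i-1}$ with $i$ internal express that cyclic rotation of tensor factors commutes with applying $\DC$, resp.\ $\eC$, to an interior leg, and so amount to a relabelling of Sweedler legs. The two remaining relations $d_nt_n=t_{n-1}d_{n-1}$ and $s_0t_n=t_{n+1}^2s_n$ again present a nested pair of $\alpha$'s on one side and a single $\alpha$ on the other; as before the contramodule associativity axiom collapses the pair, after which the module-coalgebra compatibility (in the first case) or the counit identities of $C$ and $H$ (in the second) complete the match. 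None of the relations listed up to this point requires stability, so at this stage one already has a paracyclic module.

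The last axiom, $t_n^{n+1}=\id$, is the heart of the matter and the main obstacle. First I would prove by induction on $k$ — at each step collapsing the new $\alpha$ produced by one application of $t_n$ against the accumulated one via the contramodule associativity axiom, and redistributing the $H$-action by the module-coalgebra compatibility — the closed formula
$$t_n^{k}(f)(c^0,\ldots,c^n)=\alpha\Bigl(h\mapsto f\bigl(c^{k},\ldots,c^{n},\,h\sw1\!\cdot\! c^0,\ldots,h\sw k\!\cdot\! c^{k-1}\bigr)\Bigr),\qquad 1\le k\le n+1 .$$
At $k=n+1$ the block $c^{k},\ldots,c^{n}$ is empty and the argument of $f$ is exactly the diagonal action of $h$ on $c^0\ot\cdots\ot c^n$; hence the left $H$-linearity of $f$ — which is the whole point of restricting $C^H_*(C,M)$ to $H$-linear maps — turns it into $h\mapsto h\!\cdot\! f(c^0,\ldots,c^n)$, i.e.\ into the map $r_m$ with $m=f(c^0,\ldots,c^n)$. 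The stability condition $\alpha(r_m)=m$ then yields $t_n^{n+1}(f)=f$. The difficulty here is essentially bookkeeping: keeping the tower of nested $\alpha$'s and the Sweedler indices of the diagonal action organised so that, at the final step, $H$-linearity and stability are exactly what close the argument. Collecting these verifications proves that $C^H_*(C,M)$ is a cyclic module.
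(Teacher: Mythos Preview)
Your proposal is correct and follows essentially the same route as the paper: a direct verification of the cyclic-module axioms, with the contramodule associativity collapsing nested copies of $\alpha$, the contramodule counit axiom handling $d_{n+1}s_n=\id$, and the closed formula for $t_n^{k}$ leading via $H$-linearity of $f$ and stability to $t_n^{n+1}=\id$. The paper is terser and only displays the computation $t_{n-1}d_{n-1}=d_nt_n$ and the periodicity, but your more systematic catalogue of which axioms need which ingredient (and your explicit inductive formula for the iterates of $t_n$) agrees with it in every essential point.
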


\begin{proof} One needs to check whether the maps $d_i$, $s_j$, $t_n$ satisfy the relations of a cyclic module; 
see e.g.\ \cite[p.\ 203]{Lod:cyc}. Most of the calculations are standard, we only display examples of those which make use of the contramodule axioms. For example,
\begin{eqnarray*}
(t_{n-1}\circ d_{n-1})(f)
(c^0,\ldots , c^{n-1}) &=& \alpha\left(d_{n-1}(f)\left( c^1, \ldots , c^{n-1}, (-)\!\cdot\! c^0\right)\right)\\
&=& \alpha\left(f\left( c^1, \ldots , c^{n-1}, \DC\left((-)\!\cdot\! c^0\right)\right)\right)\\
&=& \alpha\left(f\left( c^1, \ldots , c^{n-1}, (-)\sw 1\!\cdot\! c^0\sw 1,(-)\sw 2\!\cdot\! c^0\sw 2\right)\right)\\
&=& \dot{\alpha}\left( \ddot{\alpha}\left(f\left( c^1, \ldots , c^{n-1}, \dot{(-)}\!\cdot\! c^0\sw 1,\ddot{(-)}\!\cdot\! c^0\sw 2\right)\right)\right)\\
&=& \alpha\left(t_n(f)\left( c^{0}\sw 2, c^1, \ldots , c^{n-1}, (-)\!\cdot\! c^0\sw 1\right)\right)\\
&=& (d_{n}\circ t_{n})(f)(c^0,\ldots , c^{n-1}), 
\end{eqnarray*}
where the third equality follows by the module coalgebra property of $C$, and the fourth one is a consequence of the associative law for contramodules. In a similar way, using compatibility of $H$-action on $C$ with counits of $H$ and $C$, and that $\alpha \left( \eC(-) m\right) = m$, for all $m\in M$, one easily shows that $d_{n+1}\circ s_n$ is the identity map on  $C^H_n(C,M)$. The stability of $M$ is used to prove that $t_n^{n+1}$ is the identity. Explicitly,
\begin{eqnarray*}
t_n^{n+1} (f)&&\hspace{-1cm}(c^0,\ldots , c^{n}) = \alpha^{n+1}(f((-)\!\cdot\! c^0,\ldots , (-)\!\cdot\! c^{n}))\\
&=&\alpha(f((-)\sw 1\!\cdot\! c^0,\ldots , (-)\sw{n+1}\!\cdot\! c^{n}))
= \alpha(r_{f(c^0,\ldots , c^{n})}) = f(c^0,\ldots , c^{n}),
\end{eqnarray*}
where the second equality follows by the $n$-fold application of the associative law for contramodules, and the penultimate equality is a consequence of the $H$-linearity of $f$. The final equality follows by the stability of $M$.
\end{proof}

Let $N$ be a right-left stable anti-Yetter-Drinfeld module, and $M=N^*$ be the corresponding left-right stable anti-Yetter-Drinfeld contramodule, then
$$
C^H_n(C,M) =  \rhom H {C^{\otimes n+1}} {\hom Nk} \simeq \hom {N\ot_HC^{\otimes n+1}}k.
$$
With this identification, the cycle module $C^H_n(C,N^*)$ is obtained  by applying functor $\hom - k$ to the cyclic module for $N$ described in \cite[Theorem~2.1]{HajKha:Hop}.
\end{statement}

\begin{statement} {\bf Hopf-cyclic cohomology of module algebras.}   Let $A$ be a left $H$-module algebra. This means that $A$ is an algebra and a left $H$-module such that, for all $h\in H$ and $a,a'\in A$,
$$
h\!\cdot\! (aa') = (h\sw 1\!\cdot\! a)(h\sw 2\!\cdot\! a), \qquad h\!\cdot\! 1_A = \eH(h)1_A.
$$

\begin{lemma}\label{lemma}  Given a left $H$-module algebra $A$ and a left $H$-contramodule, $\hom A M$ is an $A$-bimodule with the left and right $A$-actions defined by
$$
(a\!\cdot\! f)(b) = f(ba), \qquad (f\!\cdot\! a)(b) = \alpha\left( f\left( \left( (-)\!\cdot\! a\right)b\right)\right),
$$
for all $a,b\in A$ and $f\in \hom A M$.
\end{lemma}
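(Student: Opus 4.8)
The plan is to check the defining relations of an $A$-bimodule for $\hom A M$ one at a time. The left $A$-action and the compatibility of the two actions will use only that $A$ is an associative unital algebra; the left $H$-module-algebra structure of $A$ and the left $H$-contramodule structure of $M$ enter only in the proof that the right action is associative and unital. The left action is the one induced on $\hom A M$ by viewing $A$ as a right module over itself, so $((aa')\cdot f)(b)=f(b(aa'))=f((ba)a')=(a\cdot(a'\cdot f))(b)$ and $(1_A\cdot f)(b)=f(b1_A)=f(b)$, and its associativity and unit law are immediate.

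For the right $A$-action, well-definedness is clear because $b\mapsto\alpha\big(f(((-)\cdot a)b)\big)$ is $k$-linear, so $f\cdot a\in\hom A M$. For the unit law $f\cdot 1_A=f$, I would use the module-algebra identity $h\cdot 1_A=\eH(h)1_A$: then $h\mapsto f((h\cdot 1_A)b)$ is the map $h\mapsto\eH(h)f(b)$, and the counit axiom for left $H$-contramodules, $\alpha(\eH(-)m)=m$, gives $(f\cdot 1_A)(b)=f(b)$.

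The main step, and the one I expect to require real care, is right-associativity $f\cdot(aa')=(f\cdot a)\cdot a'$. Fixing $a,a',b\in A$, I would introduce $g\in\hom{H\ot H}M$ given by $g(h\ot h')=f\big((h\cdot a)(h'\cdot a')b\big)$. The module-algebra identity $h\cdot(aa')=(h\sw 1\cdot a)(h\sw 2\cdot a')$ and associativity in $A$ identify $(f\cdot(aa'))(b)$ with $\alpha\big(g((-)\sw 1\ot(-)\sw 2)\big)$, and the associative law for left contramodules rewrites this as $\dot\alpha\big(\ddot\alpha\big(g(\ddot{-}\ot\dot{-})\big)\big)$. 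One then recognizes $\ddot\alpha\big(g(\ddot{-}\ot h')\big)=(f\cdot a)\big((h'\cdot a')b\big)$ — precisely the definition of $f\cdot a$, with the inner variable contracted — so the remaining application of $\dot\alpha$ yields $((f\cdot a)\cdot a')(b)$. The delicate point is to order the two nested copies of $\alpha$ consistently with the conventions for left contramodules, i.e.\ so that the $a$-slot is contracted before the $a'$-slot; once the slots are matched correctly the identity is forced.

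Finally, the two actions commute: $((a\cdot f)\cdot a')(b)=\alpha\big(h\mapsto f(((h\cdot a')b)a)\big)$ and $(a\cdot(f\cdot a'))(b)=(f\cdot a')(ba)=\alpha\big(h\mapsto f((h\cdot a')(ba))\big)$, and these agree because $((h\cdot a')b)a=(h\cdot a')(ba)$ by associativity of $A$. Assembling these four verifications gives the asserted $A$-bimodule structure.
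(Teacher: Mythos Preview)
Your proof is correct and follows essentially the same approach as the paper. The only cosmetic differences are that the paper runs the right-associativity computation in the opposite direction (starting from $((f\cdot a)\cdot a')(b)$ and arriving at $(f\cdot(aa'))(b)$), and dismisses the left action and the left/right compatibility as ``standard'' and ``immediate'' where you spell them out; your careful tracking of which slot is contracted by which copy of $\alpha$ in the left-contramodule convention is exactly the point.
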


\begin{proof} The definition of left $A$-action is standard, compatibility between left and right actions is immediate. To prove the associativity of the right $A$-action, take any $a,a',b\in A$ and $f\in \hom A M$, and compute
\begin{eqnarray*}
\left(\left(f\!\cdot\! a\right)\!\cdot\! a'\right)(b) &=& \dot{\alpha}\left( \ddot{\alpha} \left( f\left( \left( \ddot{(-)}\!\cdot\! a\right)\left(\dot{(-)}\!\cdot\! a'\right) b\right)
\right)\right)\\
&=& \alpha \left( f\left(\left( (-)\sw 1\!\cdot\! a\right)\left((-)\sw 2\!\cdot\! a'\right)b\right)\right)\\
&=& \alpha \left( f\left(\left( (-)\!\cdot\! \left( aa'\right)\right) b\right)\right) = \left(\left(aa'\right)\!\cdot\! f\right)(b),
\end{eqnarray*}
where the second equality follows by the definition of a left $H$-contramodule, and the third one in a consequence of the module algebra property. The unitality of the right $A$-action follows by the triangle diagram for contramodules and the fact that $h\!\cdot\! 1_A = \eH(h)1_A$.
\end{proof}

For an $H$-module algebra $A$, $A^{\otimes n+1}$ is a left $H$-module by the diagonal action
$$
h\!\cdot\! (a^0\ot a^1\ot \ldots \ot a^n):= h\sw 1\!\cdot\! a^0\ot h\sw 2\!\cdot\! a^ 1\ot \ldots \ot h\sw{n+1}\!\cdot\! a^n.
$$
Take a stable left-left  anti-Yetter-Drinfeld contramodule $M$, set $C^n_H(A,M)$ to be the space of left $H$-linear maps $\rhom H {A^{\otimes n+1}} M$,  and, for all $0\leq i,j\leq n$, define
$
\delta_i: C^{n-1}_H(A,M)\to C^{n}_H(A,M)$, $\sigma_j: C^{n+1}_H(A,M)\to C^{n}_H(C,M)$, $\tau_n: C^n_H(A,M)\to C^n_H(A,M)$, 
by
\begin{eqnarray*}
\delta_i(f)(a^0,\ldots , a^{n}) &=& f(a^0, \ldots, a^{i-1}, a^ia^{i+1}, a^{i+2},\ldots , a^{n}), \qquad 0\leq i < n, \\
\delta_n(f)(a^0,\ldots , a^{n}) &=& \alpha\left(f\left(\left((-)\!\cdot\! a^n\right) a^0, a^1, \ldots , a^{n-1}\right)\right),\\
\sigma_j (f) (c^0,\ldots , c^{n}) &=& f(a^0,\ldots ,a^j, 1_A, a^{j+1}, \ldots ,a^{n}),\\
\tau_n(f)(a^0,\ldots , a^{n}) &=& \alpha\left(f\left((-)\!\cdot\! a^n, a^0, a^1, \ldots , a^{n-1}\right)\right) .
\end{eqnarray*}
Similarly to the module coalgebra case, the above maps are well-defined by the anti-Yetter-Drinfeld condition. Explicitly, using the aformentioned condition as well as the fact that the inverse of the antipode is the antipode for the co-opposite Hopf algebra, one  computes
\begin{eqnarray*}
\tau_n(f)&&\hspace{-1cm}(h\!\cdot\!(a^0,\ldots , a^{n}) ) = \alpha\left(f\left(((-)h\sw{n+1})\!\cdot\! a^n, h\sw{1}\!\cdot\! a^0, h\sw{2}\!\cdot\! a^1, \ldots , h\sw{n}\!\cdot\! a^{n-1}\right)\right)\\
&=& \alpha\left(f\left((h\sw 2 S^{-1}(h\sw 1)(-)h\sw{n+3})\!\cdot\! a^n, h\sw{1}\!\cdot\! a^0, h\sw{2}\!\cdot\! a^1, \ldots , h\sw{n+2}\!\cdot\! a^{n-1}\right)\right)\\
&=&\alpha\left(h\sw 2\!\cdot\! f\left((S^{-1}(h\sw 1)(-)h\sw{3})\!\cdot\! a^n,  a^0, \ldots ,  a^{n-1}\right)\right)
= h\!\cdot\!\tau_n(f)(a^0,\ldots , a^{n} ).
\end{eqnarray*} 
Analogous calculations ensure that also $\delta_n$ is well-defined.

\begin{theorem}\label{thm.alg}  Given a left $H$-module algebra $A$ and a stable left-left  anti-Yetter-Drinfeld contramodule $M$, $C_H^*(A,M)$ with the $\delta_i$, $\sigma_j$, $\tau_n$ defined above is a (co)cyclic module.
\end{theorem}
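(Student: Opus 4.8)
The plan is to verify directly that the operators $\delta_i$, $\sigma_j$ and $\tau_n$ satisfy the defining relations of a cocyclic module \cite[p.~203]{Lod:cyc}, following the template already used in the module coalgebra case. Well-definedness is in hand: the faces $\delta_i$ with $i<n$ and the degeneracies $\sigma_j$ are plainly $H$-linear, while the two operators built from the contramodule structure map $\alpha$ --- the top face $\delta_n$ and the cyclic map $\tau_n$ --- are $H$-linear by the left-left anti-Yetter-Drinfeld condition, as displayed above. I would first note that Lemma~\ref{lemma} already absorbs part of the work: for a fixed choice of the remaining arguments of $f$, both $\delta_n$ and $\tau_n$ are instances of the right $A$-action on the bimodule $\hom A M$, and the associativity and unitality of that action, proved in the Lemma from the contramodule axioms together with the module algebra law, supply precisely the cosimplicial identities that pair the $\alpha$-built face with itself, respectively with the unit-inserting degeneracy.

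What remains then splits into the usual two families. The relations not genuinely engaging $\alpha$ --- the cosimplicial identities among the multiplication faces and the degeneracies, together with the face--cyclic and degeneracy--cyclic compatibilities for $1\le i\le n$, where the single $\alpha$ coming from a $\tau$ merely rides along --- use only associativity of $A$ and the counit normalisations, and I would not display them. Among the genuinely contramodule-flavoured identities I would display the following. The compatibility $\tau_n\circ\delta_0 = \delta_n$ is immediate once the definitions are unravelled. For $1\le i\le n$ one checks $\tau_n\circ\delta_i = \delta_{i-1}\circ\tau_{n-1}$; the only non-routine case, $i=n$, is settled by the associative law for left contramodules, which reconciles the nested pair of $\alpha$'s that appears on one side (from the $\alpha$-built face followed by $\tau$) with the single $\alpha$ on the other, the module algebra identity $h\!\cdot\!(aa') = (h\sw1\!\cdot\! a)(h\sw2\!\cdot\! a')$ playing here the role the module coalgebra identity played in the proof of the coalgebra theorem. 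The degeneracy relation expressing that $\sigma_n$ composed with the $\alpha$-built face is the identity uses the triangle axiom $\alpha(\eH(-)m)=m$ together with $h\!\cdot\! 1_A = \eH(h)1_A$; the remaining $\sigma$--$\tau$ compatibilities, in particular $\tau_n\circ\sigma_0 = \sigma_n\circ\tau_{n+1}^{2}$, are handled in the same manner.

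The substantive point is $\tau_n^{n+1} = \id$. Iterating $\tau_n$ a total of $n+1$ times returns the arguments of $f$ to their original cyclic order and leaves $n+1$ nested applications of $\alpha$ to $f\big((-)\!\cdot\! a^0,\dots,(-)\!\cdot\! a^n\big)$; an $n$-fold use of the associative law for contramodules collapses these into a single $\alpha$ applied to $f\big((-)\sw1\!\cdot\! a^0,\dots,(-)\sw{n+1}\!\cdot\! a^n\big)$, which by $H$-linearity of $f$ equals $\alpha(r_{f(a^0,\dots,a^n)})$, and this is $f(a^0,\dots,a^n)$ by stability of $M$. This is the mirror image of the computation displayed in the coalgebra case.

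The main obstacle I anticipate is notational rather than conceptual: through the nested applications of $\alpha$ one must keep careful track of which occurrence of the structure map pairs with which argument slot and which leg of which coproduct, so that the contramodule associative law is applied in the correct slot --- the dotted-$\alpha$ notation of the paper is designed precisely for this, and once adopted every step is a routine, if index-heavy, analogue of one already carried out for module coalgebras. No use of the anti-Yetter-Drinfeld compatibility beyond well-definedness is needed. Finally, for $M=N^*$ with $N$ a suitable stable anti-Yetter-Drinfeld module the statement can alternatively be deduced by applying $\hom - k$ to the corresponding cocyclic module of $N$, which provides an independent check.
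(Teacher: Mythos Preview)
Your proposal is correct and follows essentially the same route as the paper's proof: both invoke Lemma~\ref{lemma} (via the identification $\hom{A^{\otimes n+1}}{M}\simeq\hom{A^{\otimes n}}{\hom A M}$) to reduce the simplicial relations to standard $A$-bimodule cohomology, then single out $\tau_n\circ\delta_n=\delta_{n-1}\circ\tau_{n-1}$ and $\tau_n^{n+1}=\id$ as the only identities genuinely requiring the contramodule associative law, the module algebra property, and stability. Your organisation is a bit more explicit in enumerating the families of relations, but the argument and the key displayed computations match the paper's.
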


\begin{proof}
In view of Lemma~\ref{lemma} and  taking into account the canonical  isomorphism $\hom {A^{\otimes n+1}} M \simeq \hom {A^{\otimes n}} {\hom A M}$,
$$
\hom {A^{\otimes n+1}} M \ni f\mapsto \left[a^1\ot a^2\ot \ldots \ot a^n \mapsto f\left(-,a^1,a^2,\ldots, a^n\right)\right],
$$
 the simplicial part comes from the standard $A$-bimodule cohomology. Thus only the relations involving $\tau_n$  need to be checked. In fact only the equalities $\tau_n \circ \delta_n = \delta_{n-1}\circ \tau_{n-1}$ and $\tau_n^{n+1} =\id$  require one to make use of definitions of a module algebra and a left contramodule.  In the first case, for all $f\in C_H^n(A,M)$,
 \begin{eqnarray*}
 (\tau_n \circ\delta _n )(f)(a^0,\ldots ,a^n) &=& \dot{\alpha}\left(\ddot{\alpha}\left( f\left(\left( \ddot{(-)}\!\cdot \! a^{n-1}\right)\left(\dot{(-)}\!\cdot\! a^n\right),a^0,\dots ,a^{n-2}\right)\right)\right)\\
 &=& \alpha\left( f\left(\left((-)\sw 1\!\cdot \! a^{n-1}\right)\left((-)\sw 2\!\cdot\! a^n\right),a^0,\dots ,a^{n-2}\right)\right)\\
 &=&  \alpha\left( f\left((-)\!\cdot \! \left(a^{n-1}a^n\right),a^0,\dots ,a^{n-2}\right)\right)\\
 &=& (\delta_{n-1} \circ\tau _{n-1} )(f)(a^0,\ldots ,a^n), 
 \end{eqnarray*}
 where the second equality follows by the associative law for left contramodules and the third one by the definition of a left $H$-module algebra. The equality $\tau_n^{n+1} =\id$  follows by the associative law of contramodules, the definition of left $H$-action on $A^{\otimes n+1}$, and by the stability of anti-Yetter-Drinfeld contramodules.
\end{proof}

In the case of a contramodule $M$ constructed on the dual vector space of a stable right-right anti-Yetter-Drinfeld module $N$, the complex described in Theorem~\ref{thm.alg} is the right-right version of Hopf-cyclic complex of a left module algebra with coefficients in $N$ discussed in \cite[Theorem~2.2]{HajKha:Hop}.

\end{statement}
\begin{statement} {\bf Anti-Yetter-Drinfeld contramodules and hom-connections.}
  Anti-Yetter-Drinfeld modules over a Hopf algebra $H$ can be understood as comodules of an $H$-coring; see \cite{Brz:fla} for explicit formulae and \cite{BrzWis:cor} for more information about corings. These are corings with a group-like element, and thus their comodules can be interpreted as modules with a flat connection; see \cite{Brz:fla} for a review. Consequently, anti-Yetter-Drinfeld modules are modules with a flat connection (with respect to a suitable differential structure); see \cite{KayKha:Hop}. 

Following similar line of argument anti-Yetter-Drinfeld contramodules over a Hopf algebra $H$ can be understood as contramodules of an $H$-coring. This is a coring of an entwining type, as a vector space built on $H\ot H$, and its form is determined by the anti-Yetter-Drinfeld compatibility conditions between action and contra-action. The coring $H\ot H$ has a group-like element $1_H\ot 1_H$, which induces a differential graded algebra structure on tensor powers of the kernel of the counit of $H\ot H$. As explained in \cite[Section~3.9]{Brz:non} contramodules of a coring with a group-like element correspond to  {\em flat hom-connections}. Thus, in particular, anti-Yetter-Drinfeld contramodules are  flat hom-connections. We illustrate this discussion by the example of right-right anti-Yetter-Drinfeld contramodules. 

First recall the definition of hom-connections from \cite{Brz:non}. Fix a differential graded algebra $\Omega A$ over an algebra $A$. A {\em hom-connection}  is a pair $(M,\nabla_0)$, where $M$ is a right $A$-module and $\nabla_0$ is a $k$-linear map from the space of right $A$-module homomorphisms $\rhom A {\Omega^1 A} M$ to $M$, $\nabla_0: \rhom A {\Omega^1 A} M \to M$, such that, for all $a\in A$, $f\in \rhom A {\Omega^1 A} M $, 
$$
\nabla_0(f\!\cdot\! a) = \nabla_0(f)\!\cdot\! a +f(da),
$$
where $f\!\cdot \!a\in \rhom A {\Omega^1 A}  M$ is given  by $f\!\cdot \!a: \omega \mapsto f(a\omega)$, and $d: \Omega^*A\to \Omega^{*+1}A$ is the differential.  Define $\nabla_1 : \rhom A {\Omega^2 A} M \to\rhom A {\Omega^1 A} M$, by
$
\nabla_1(f)(\omega) = \nabla_0(f\!\cdot\!\omega) + f(d\omega),
$
where, for all $f\in \rhom A {\Omega^2 A} M$, the map $f\!\cdot\!\omega \in \rhom A {\Omega^1 A} M$ is given by $\omega'\mapsto f(\omega\omega')$. The composite $F= \nabla_0\circ\nabla_1$ is called the {\em curvature} of $(M,\nabla_0)$. The hom-connection $(M,\nabla_0)$ is said to be {\em flat} provided its curvature is equal to zero.

Consider a Hopf algebra $H$ with a bijective antipode, and define an $H$-coring $\cC= H\ot H$ as follows. The $H$ bimodule structure of  $\cC $ is given by
$$
h\!\cdot\! (h'\ot h'') = h\sw 1 h' S^{-1}(h\sw 3)\ot h\sw 2 h'', \qquad (h'\ot h'')\!\cdot\! h = h'\ot h''h,
$$
the coproduct is $\DH\ot \id_H$ and counit $\eH\ot \id_H$. Take a right $H$-module $M$. The identification of right $H$-linear maps $H\ot H \to M$ with $\hom H M$ allows one to identify right contramodules of the $H$-coring $\cC$ with right-right anti-Yetter-Drinfeld contramodules over $H$.

 The kernel of the counit in $\cC$ coincides with $H^+\ot H$, where $H^+ = \ker \eH$. Thus the associated differential graded algebra over $H$ is given by $\Omega^nH = (H^+\ot H)^{\ot_H n} \simeq  (H^+)^{\ot n}\ot H$, with the differential given on elements $h$ of $H$ and one-forms $h'\ot h \in H^+\ot H$ by
$$
dh = 1_H\ot h - h\sw 1S^{-1}(h\sw 3)\ot h\sw 2, 
$$
$$
 d(h'\ot h) = 1_H\ot h'\ot h - h'\sw 1\ot h'\sw 2\ot h + h'\ot h\sw 1S^{-1}(h\sw 3)\ot h\sw 2.
$$
Take a right-right anti-Yetter-Drinfeld contramodule $M$ over a Hopf algebra $H$ and identify $\rhom H {\Omega^1 H} M $ with $\hom {H^+} M$. For any $f\in \hom {H^+} M$, set $\bar{f}: H\to M$ by $\bar{f}(h) = f(h-\eH(h)1_H)$, and then define
$$
\nabla_0: \hom {H^+} M\to M, \qquad \nabla_0(f) = \alpha (\bar{f}).
$$
$(M,\nabla_0)$ is a flat hom-connection with respect to the differential graded algebra $\Omega H$.
\end{statement}
\begin{statement} {\bf Final remarks.}  
In this note  a new class of coefficients for the Hopf-cyclic homology was introduced. 
It is an open question to what extent Hopf-cyclic homology with coefficients in anti-Yetter-Drinfeld contramodules is useful in studying problems arising in (non-commutative) geometry. The answer is likely to depend on the supply of (calculable) examples, such as those coming from the transverse index theory of foliations (which motivated the introduction of Hopf-cyclic homology in \cite{ConMos:cyc}). It is also likely to depend on the structure of Hopf-cyclic homology with contramodule coefficients. One can easily envisage that, in parallel to the theory with anti-Yetter-Drinfeld module coefficients, the cyclic theory described in this note admits cup products (in the case of module coefficients these were foreseen in \cite{HajKha:Hop} and constructed in \cite{KhaRan:cup}) or homotopy  formulae of the type discovered for anti-Yetter-Drinfeld modules in \cite{MosRan:cyc}. Alas, these topics go beyond the scope of this short note. The author is convinced, however, of the worth-whileness of investigating them further. 
\end{statement}

 \end{document}